\newcommand{\Addresses}{{
  \bigskip
  \footnotesize

  \textsc{Mathematical Institute, University of Oxford, Woodstock
Rd, Oxford OX2 6GG, UK}\par\nopagebreak
  \textit{E-mail address:} \texttt{francesca.bianchi@maths.ox.ac.uk}
}}
\DeclareMathOperator{\GL}{GL}
\newcommand{\subalign}[1]{%
  \vcenter{%
    \Let@ \restore@math@cr \default@tag
    \baselineskip\fontdimen10 \scriptfont\tw@
    \advance\baselineskip\fontdimen12 \scriptfont\tw@
    \lineskip\thr@@\fontdimen8 \scriptfont\thr@@
    \lineskiplimit\lineskip
    \ialign{\hfil$\m@th\scriptstyle##$&$\m@th\scriptstyle{}##$\crcr
      #1\crcr
    }%
  }
}
\theoremstyle{plain}
\newtheorem{thm}{Theorem}[section]
\newtheorem{prop}[thm]{Proposition}
\newtheorem{lemma}[thm]{Lemma}
\newtheorem{cor}[thm]{Corollary}
\theoremstyle{definition}
\newtheorem{mydef}[thm]{Definition}
\newtheorem{rmk}[thm]{Remark}
\newtheorem{ass}{Assumption}
\def\th@plain{%
  \thm@notefont{}
  \itshape 
}
\def\th@definition{%
  \thm@notefont{}
  \normalfont 
}
\author{Francesca Bianchi}
\title{A result on the analytic $\mu$-invariant of elliptic curves}
\date{\today}
\begin{document}
\begin{abstract}
We generalise a result by Greenberg and Vatsal on the relation between the analytic $\mu$-invariants of two elliptic curves whose $p^i$-torsion subgroups are isomorphic as Galois modules, for suitable $i$.
\end{abstract}
\maketitle
\section{Introduction}
\sloppy
Let $E$ be an elliptic curve over the field of rational numbers $\mathbb{Q}$ and let $p$ be an odd prime at which $E$ has good ordinary reduction. Mazur and Swinnerton-Dyer \cite{arithmeticofWeilcurves} attached to the pair $(E,p)$ a $p$-adic $L$-function $\mathcal{L}_p(E/\mathbb{Q},T)$, which lies in $\Lambda\otimes \mathbb{Q}_p$, where $\Lambda=\mathbb{Z}_p[[T]]$ is the ring of power series in the variable $T$ with $p$-adic integral coefficients. The function $\mathcal{L}_p(E/\mathbb{Q},T)$ interpolates the central values of the twists of the complex $L$-function of $E$ by the characters of the absolute Galois group of $\mathbb{Q}$ factoring through the $\mathbb{Z}_p$-cyclotomic extension.

The analytic $\mu$-invariant of $\mathcal{L}_p(E/\mathbb{Q},T)$, denoted $\mu_E$, is defined as the $p$-adic valuation of the highest power of $p$ dividing $\mathcal{L}_p(E/\mathbb{Q},T)$. Wuthrich \cite{wuthrichintegral} proved that $\mathcal{L}_p(E/\mathbb{Q},T)$ is in fact an integral power series, when $p$ is a prime of good ordinary reduction: that is, $\mathcal{L}_p(E/\mathbb{Q},T)\in \Lambda$ and $\mu_E\geq 0$. 

The main result of this article is the following.

\begin{thm}
\label{main}
Let $E_1/\mathbb{Q}$ and $E_2/\mathbb{Q}$ be elliptic curves and $p$ an odd prime at which both $E_1$ and $E_2$ have good ordinary reduction. Assume that $E_1[p^{\mu_{E_1}}]$ and $E_2[p^{\mu_{E_1}}]$ are isomorphic as Galois modules and that $E_1[p]$ and $E_2[p]$ are irreducible. Then $\mu_{E_1}\leq \mu_{E_2}$. Furthermore, if $E_1[p^{\mu_{E_1}+1}]$ and $E_2[p^{\mu_{E_1}+1}]$ are isomorphic, then $\mu_{E_1}=\mu_{E_2}$.
\end{thm}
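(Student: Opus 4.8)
The plan is to upgrade, from the residue field to $\mathbb{Z}/p^n\mathbb{Z}$, the modular-symbol congruence argument that Greenberg and Vatsal used to treat the case $\mu_{E_1}=0$. By the very definition of the analytic $\mu$-invariant, the theorem reduces to the following assertion: whenever $E_1[p^n]\cong E_2[p^n]$ as Galois modules, there are $g_1,g_2\in\Lambda$ of $\mu$-invariant $0$ and a unit $u\in\Lambda^\times$ with $g_1\,\mathcal{L}_p(E_1/\mathbb{Q},T)\equiv u\,g_2\,\mathcal{L}_p(E_2/\mathbb{Q},T)\pmod{p^n}$. Granting this and using additivity of the $\mu$-invariant, take $n=\mu_{E_1}$: then $p^{\mu_{E_1}}\mid g_1\mathcal{L}_p(E_1/\mathbb{Q},T)$, hence $p^{\mu_{E_1}}\mid g_2\mathcal{L}_p(E_2/\mathbb{Q},T)$, hence $\mu_{E_2}\geq\mu_{E_1}$ (when $\mu_{E_1}=0$ the hypothesis is vacuous and this is just Wuthrich's $\mu_{E_2}\geq 0$); and if moreover $E_1[p^{\mu_{E_1}+1}]\cong E_2[p^{\mu_{E_1}+1}]$, take $n=\mu_{E_1}+1$ and combine with $\mathcal{L}_p(E_1/\mathbb{Q},T)\not\equiv 0\pmod{p^{\mu_{E_1}+1}}$ to force $\mu_{E_2}\leq\mu_{E_1}$, hence equality. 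In the substantial cases $n\geq 1$, so $E_1[p]\cong E_2[p]$, which is irreducible by hypothesis; this is the residual representation the argument below exploits.

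First I would pass from the Galois-module hypothesis to congruences of Hecke eigenvalues. Writing $N_i$ for the conductor of $E_i$, an isomorphism $E_1[p^n]\cong E_2[p^n]$ is an isomorphism of the reductions modulo $p^n$ of the two $p$-adic Galois representations $G_{\mathbb{Q}}\to\GL_2(\mathbb{Z}_p)$, so comparing traces of Frobenius yields $a_\ell(E_1)\equiv a_\ell(E_2)\pmod{p^n}$ for every prime $\ell\nmid pN_1N_2$; by Carayol's lemma the converse also holds, since $E_i[p]$ is absolutely irreducible. As both curves are $p$-ordinary, the unit roots of $X^2-a_p(E_i)X+p$ are then congruent modulo $p^n$, so the $p$-stabilisations of $f_{E_1}$ and $f_{E_2}$ agree modulo $p^n$. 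To compare modular symbols at a single level $N=\operatorname{lcm}(N_1,N_2)$, note that for a prime $\ell\mid N$ with $\ell\nmid N_i$ the curve $E_i$ has good reduction, so $E_i[p^n]$ is unramified at $\ell$; comparing characteristic polynomials of $\mathrm{Frob}_\ell$ on $E_1[p^n]\cong E_2[p^n]$ then shows one may $U_\ell$-stabilise each $f_{E_i}$ by roots congruent modulo $p^n$. The outcome is a pair of level-$N$ eigensystems congruent modulo $p^n$ whose $p$-adic $L$-functions differ from $\mathcal{L}_p(E_i/\mathbb{Q},T)$ only by Euler factors $1-\beta_\ell^{-1}\ell\,\sigma_\ell$ at the auxiliary primes (with $\sigma_\ell\in\Lambda^\times$ the image of a Frobenius at $\ell$); since $\ell\neq p$ one checks $v_p(\beta_\ell^{-1}\ell)=0$, so each such factor has $\mu$-invariant $0$ — these are the $g_i$ above.

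The main step is then the congruence modulo $p^n$ of the $p$-adic $L$-functions of these two level-$N$ forms. I would establish it in the spirit of Vatsal's analysis of congruences of $p$-adic $L$-functions: describe $\mathcal{L}_p$ through the plus modular symbol normalised by the \emph{canonical} (cohomological) period rather than the N\'eron period. Because $E_i[p]$ is irreducible there is no rational $p$-isogeny in the isogeny class of $E_i$ and the Manin constant is prime to $p$, so the $\mu$-invariant computed with the canonical period still equals $\mu_{E_i}$. Residual irreducibility also forces the localisation at the relevant maximal Hecke ideal of the integral compactly supported cohomology to be free of rank one, so that a primitive integral modular symbol is well defined up to a $p$-adic unit and its reduction modulo $p^n$ makes canonical sense and is functorial in the eigensystem; transporting the congruence of the level-$N$ eigensystems through this identification yields the congruence of the two level-$N$ $p$-adic $L$-functions modulo $p^n$, up to a $p$-adic unit. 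Combining this with the previous paragraph gives the assertion of the first paragraph, hence the theorem.

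The principal obstacle is exactly this main step. In residue characteristic the Greenberg--Vatsal argument needs only one-dimensionality of the $\mathfrak{m}$-localised cohomology with $\mathbb{F}_p$-coefficients; running it modulo $p^n$ requires genuine freeness of the integral localised cohomology module, a normalisation of periods precise enough that "reduction modulo $p^n$" of a modular symbol is unambiguous, and control of the level discrepancy between $E_1$ and $E_2$ — and it is for precisely these points that the hypothesis that $E_1[p]$ and $E_2[p]$ are irreducible is indispensable. A secondary technical point, already implicit above, is verifying that the local conditions at the auxiliary bad primes really pin down the stabilising $U_\ell$-roots to the full precision modulo $p^n$.
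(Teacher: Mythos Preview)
Your overall strategy is exactly the paper's: pass to a common level, invoke Vatsal's canonical periods and the cofreeness of the localised parabolic cohomology (supplied by residual irreducibility via Wiles) to obtain a congruence of $p$-adic $L$-functions modulo $p^n$, relate canonical to N\'eron periods using irreducibility, and conclude by stripping off auxiliary Euler factors of $\mu$-invariant zero. The deduction of the inequality and the equality from the congruence is also identical.

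The one substantive tactical difference is how the level discrepancy is handled. You propose to $U_\ell$-stabilise each $f_{E_i}$ at the auxiliary primes $\ell\mid N$, $\ell\nmid N_i$, by roots chosen congruent modulo $p^n$; the paper instead \emph{depletes} both forms at \emph{every} prime dividing $N_1N_2$, obtaining eigenforms $g_1,g_2$ of level $M^2$ (with $M=\mathrm{lcm}(N_1,N_2)$) satisfying $g_1\equiv g_2\pmod{p^i}$ outright. Depletion sidesteps precisely the ``secondary technical point'' you flag, and that point is a genuine obstruction to your version as written: if $E_1$ has additive reduction at $\ell$ (so $\ell^2\mid N_1$) while $\ell\nmid N_2$, the $U_\ell$-eigenvalue on the $E_1$ side is $0$, yet no root of $X^2-a_\ell(E_2)X+\ell$ is $\equiv 0\pmod p$ since the product of the roots is $\ell$; and even at multiplicative primes, lifting a congruent root to precision $p^n$ requires a separation hypothesis you have not secured. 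Depleting everywhere gives $U_\ell$-eigenvalue $0$ on both sides and removes these issues; the resulting $\Sigma$-incomplete $p$-adic $L$-functions differ from $\mathcal{L}_p(E_i,T)$ by the full Euler product over $\Sigma$, which has $\mu$-invariant zero --- these factors play the role of your $g_i$.
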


Theorem \ref{main} is a generalisation of a part of\cite[Theorem (1.4)]{invariants}, which covers the case when $\mu_{E_1}=0$.  By extending Greenberg and Vatsal's result to a more general setting, we aim to provide a perhaps more detailed proof of their statement in \cite{invariants} as well. A key ingredient is the theory of canonical periods of cuspidal eigenforms and resulting congruences between algebraic values of $L$-functions developed by Vatsal in \cite{vatsal} and \cite{vatsalintegralperiods}.

We remark that Theorem \ref{main} already has an algebraic counterpart in the literature. Namely, Barman and Saikia \cite{barman} proved the analogue of Theorem \ref{main} for the \emph{algebraic} $\mu$-invariants of $E_1$ and $E_2$. Here by the algebraic $\mu$-invariant of $(E,p)$ we mean the $p$-adic valuation of the highest power of $p$ dividing the characteristic series of the Pontryagin dual of the $p$-primary part of the Selmer group of $E$ over the $\mathbb{Z}_p$-cyclotomic extension of $\mathbb{Q}$ (see \cite{green}). Barman and Saikia's result is proved under the assumption of triviality of the groups $E_1(\mathbb{Q})[p]$ and $E_2(\mathbb{Q})[p]$ replacing the stronger irreducibility of $E_1[p]$ and $E_2[p]$.

We note that the method we use does extend to the case when $E_1[p]$ and $E_2[p]$ are reducible, as long as the unique unramified character appearing in their semisimplification is odd. However, Greenberg and Vatsal showed in \cite[Theorem (1.3)]{invariants} that in this case the $\mu$-invariant is zero.\\

{\bf Acknowledgements.} The author is extremely grateful to Andrew Wiles for his time and the precious technical help provided. She would also like to thank Jennifer Balakrishnan for her support and for proof-reading and Thanasis Bouganis for some conversations had on the topic.
\section{Notation and auxiliary results}
Let $p$ be an odd prime and fix once and for all embeddings of $\bar{\mathbb{Q}}$ into $\mathbb{C}$ and into the algebraic closure $\bar{\mathbb{Q}}_p$ of $\mathbb{Q}_p$.

We begin by restricting some definitions and results from \cite{vatsal} and \cite{vatsalintegralperiods} to the case of weight 2 cusp forms (rather than dealing with arbitrary weight).
Let $f$ be a normalised cuspidal eigenform of weight $2$ on $\Gamma=\Gamma_1(M)$, for an integer $M\geq 4$. Let $K$ be the $p$-adic field obtained by completing the number field generated over $\mathbb{Q}$ by the coefficients of $f$ at a prime $\mathfrak{p}$ above $p$ and let $O$ be the ring of integers of $K$. Furthermore, denote by $S_2(\Gamma,O)$ the ring of weight $2$ cusp forms on $\Gamma$ with coefficients in $O$ and by $\mathbb{T}$ the $O$-algebra generated by the Hecke operators acting on $S_2(\Gamma, O)$. There is a homomorphism $\lambda_f: \mathbb{T}\to O$, which maps a Hecke operator to the corresponding eigenvalue of $f$. We denote its kernel by $I_f$ and write $\mathfrak{m}$ for the maximal ideal of $\mathbb{T}$ determined by $\mathfrak{p}$ and $I_f$. Note, in particular, that if $g$ is another normalised eigenform in $S_2(\Gamma,O)$ which satisfies a congruence $g\equiv f \mod \mathfrak{p}$, then $f$ and $g$ determine the same maximal ideal $\mathfrak{m}$.

Endow the ring $O$ with the trivial action of the group $\Gamma$. Then the first cohomology group of the $\Gamma$-module $O$ is $H^1(\Gamma,O)=\text{Hom}(\Gamma, O)$. Let $P$ be the set of parabolic elements of $\Gamma$, i.e. the set of matrices in $\Gamma$ which fix exactly one element of $\mathbb{R}\cup \{\infty\}$. The parabolic cohomology group $H^1_P(\Gamma,O)$ of Eichler-Shimura is the subgroup of $H^1(\Gamma,O)=\text{Hom}(\Gamma,O)$ consisting of those homomorphisms which vanish on $P$.

Besides being a group, $H_P^1(\Gamma,O)$ also carries the structure of a $\mathbb{T}$-module (see \cite{shimuraauto} or \cite[p. 116]{diamondim} for an explicit description of the action of double cosets). We denote by $H_P^1(\Gamma,O)_{\mathfrak{m}}$ the localisation of $H_P^1(\Gamma,O)$ at $\mathfrak{m}$.
Furthermore, complex conjugation acts on $H_P^1(\Gamma, O)$ and we write $H_P^1(\Gamma, O)^{\pm}$ for the $\pm$-eigenspaces.

The results in \cite{vatsal} and \cite{vatsalintegralperiods} are then derived under the following assumption.
\begin{ass}
\label{mainassumption}
There is an $\alpha\in \{+,-\}$ such that $H_P^1(\Gamma,O)_{\mathfrak{m}}^{\alpha}$ is cofree of rank $1$, meaning that its dual is a free $\mathbb{T}$-module of rank $1$. Such an $\alpha$ is called an \emph{admissible sign}. In symbols,
\begin{equation*}
H_P^1(\Gamma, O)_{\mathfrak{m}}^{\alpha}\cong \text{Hom}_O(\mathbb{T}_{\mathfrak{m}},O)_{\mathfrak{m}}.
\end{equation*}
\end{ass}
By duality, we also have an isomorphism $S_2(\Gamma,O)\cong \text{Hom}_O(\mathbb{T},O)$ as $\mathbb{T}$-modules (see \cite[Proposition 12.4.13]{diamondim}). Therefore, Assumption \ref{mainassumption} implies that $H_P^1(\Gamma, O)_{\mathfrak{m}}^{\alpha}\cong S_2(\Gamma,O)_{\mathfrak{m}}$ and we may fix an isomorphism $\iota:S_2(\Gamma,O)_{\mathfrak{m}}\cong H_P^1(\Gamma, O)_{\mathfrak{m}}^{\alpha}$. Denote by $\delta_f^{\alpha}$ the element in the parabolic cohomology group corresponding to $f$ under this isomorphism, i.e. $\delta_f^{\alpha}=\iota(f)$.

If $\mathbb{T}_{\mathfrak{m}}\otimes K =\prod R_i$ for local $K$-algebras $R_i$, then there is a unique $i$ such that $\lambda_f$ factors through $R_i$ and we have $K=R_i$ (see \cite{vatsalintegralperiods} for more details).\\

Consider the $\mathbb{C}$-valued differential form $f(z)dz$. The map $\gamma\mapsto \int_{z_0}^{\gamma z_0} f(z)dz$ is a homomorphism $\Gamma\to \mathbb{C}$, which vanishes on the parabolic subgroup $P$ and which is independent of the choice of a basepoint $z_0$ in the upper half plane. Denote by $\omega_f$ the cohomology class in $H_P^1(\Gamma,\mathbb{C})$ corresponding to the $1$-cocycle just constructed. Then $\omega_f$ may be decomposed into a sum of eigenvectors $\omega_f^{+}$ and $\omega_f^{-}$ for the action of complex conjugation. Since $f$ is an eigenform of level $M$, the cocycle $\omega_f^{\pm}$ is an eigenvector also for the Hecke algebra of level $M$. The same is true for $\delta_f^{\alpha}$. Since $K$ is a $1$-dimensional space, there thus exists $\Omega_f^{\alpha}$ such that $\Omega_f^{\alpha}\delta_f^{\alpha}=\omega_f^{\alpha}$. It is clear that $\Omega_f^{\alpha}$ depends on the choice of isomorphism $\iota$; however, periods $\Omega_f^{\alpha}$ corresponding to different isomorphisms differ merely by multiplication by a $p$-adic unit.

\begin{mydef}
\label{canonical}
Let $\Omega_f^{\alpha}\in \mathbb{C}$ be as above. Then $\Omega_f^{\alpha}$ is a \emph{canonical period} attached to $f$ and $\alpha$, unique up to multiplication by a $p$-adic unit.
\end{mydef}
\begin{rmk}
The canonical period of Definition \ref{canonical} corresponds to a choice of transcendental Shimura period. In particular, let $f=\sum_{n=1}^{\infty}a_n q^n$ be a weight $2$ normalised eigenform of level $M$ and let $L(f,s)=\sum_{n=1}^{\infty}a_n n^{-s}$ be the complex $L$-function of $f$, which is convergent for $\text{Re}(s)> \frac{3}{2}$ and admits an analytic continuation to the whole complex plane. If $\chi$ is a Dirichlet character, we also write $L(f,\chi,s)=\sum_{n=1}^{\infty}a_n\chi(n)n^{-s}$ and $\alpha=\chi(-1)$ (note that $L(f,s)=L(f,\mathbb{I},s)$ where $\mathbb{I}$ is the trivial character). Then Shimura \cite{shimuraspecialvalues} proved the existence of certain transcendental periods, denoted $\Omega_f^{Shim,\pm}$, with the property that the ratio $\frac{L(f,\chi, 1)}{(-2\pi i)\Omega_f^{Shim,\alpha}}$ is an algebraic number. However, the quantity $\Omega_f^{Shim,\alpha}$ is only defined up to multiplication by an element in ${\bar{\mathbb{Q}}}^{\times}$. The definition of $\Omega_f^{\alpha}$ then corresponds to a choice of Shimura period (up to a $p$-adic unit).
\end{rmk}

The canonical periods will appear in the normalisation in the interpolation property of the $p$-adic $L$-function of $f$. When $f$ is a weight $2$ newform corresponding to an elliptic curve $E$, the canonical periods of $f$ are related to the N\'eron periods of $E$. In particular, let $E$ be an elliptic curve over $\mathbb{Q}$ with unique global minimal Weierstrass equation
\begin{equation*}
y^2+a_1xy+a_3y=x^3+a_2x^2+a_4x+a_6,
\end{equation*}
with $a_1,a_2,a_3,a_4,a_6\in \mathbb{Z}$, $a_1,a_3\in \{0,1\}$ and $a_2\in \{-1,0,1\}$. The invariant differential of this Weierstrass equation for $E$ is $\omega_E=\frac{dx}{2y+a_1x+a_3}$. Let $H_1(E,\mathbb{Z})$ be the first $\mathbb{Z}$-homology group of $E$. Then $H_1(E,\mathbb{Z})$ decomposes as a direct sum of two copies of $\mathbb{Z}$, each being an eigenspace for the action of complex conjugation. In particular, let $\gamma_{\alpha}$ be a generator of $H_1(E,\mathbb{Z})^{\alpha}$. We define the N\'eron periods of $E$ as $\Omega_E^{\alpha}=\int_{\gamma_{\alpha}}\omega_E$.

\begin{prop}
\label{neronequalf}
Let $E$ be an elliptic curve over $\mathbb{Q}$ and let $f$ be the newform corresponding to $E$ by modularity. Let $p$ be a prime of good ordinary or multiplicative reduction for $E$ and assume that $E[p]$ is irreducible. Then $\Omega_E^{\alpha}$ and $(-2\pi i)\Omega_f^{\alpha}$ are equal up to multiplication by a $p$-adic unit.
\end{prop}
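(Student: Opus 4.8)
The plan is to realise both periods inside the parabolic cohomology of $\Gamma=\Gamma_1(M)$, with $M$ the conductor of $E$, by means of the modular parametrisation $\varphi\colon X_1(M)\to E$ and the Manin constant, and then to compare the two integral structures using Assumption \ref{mainassumption} and Poincar\'e duality. Throughout, $\alpha$ is an admissible sign; Assumption \ref{mainassumption} is satisfied in our setting precisely because $E[p]$ is irreducible (in fact both signs are then admissible), so $\delta_f^\alpha$ is defined. We use freely the identification of $H_P^1(\Gamma,O)$ with $H^1(X_1(M),O)$, valid since $\Gamma$ is torsion-free, and the resulting perfect evaluation pairing $\langle\,\cdot\,,\,\cdot\,\rangle$ between $H_P^1(\Gamma,O)$ and $H_1(X_1(M),O)$, which is Hecke-equivariant and compatible with complex conjugation.

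First I would record the identity $\varphi^*\omega_E=c_E\cdot 2\pi i\,f(z)\,dz$, where $c_E\in\mathbb{Z}$ is the Manin constant of $E$; passing to cohomology classes and using that $\varphi$ is defined over $\mathbb{Q}$, hence commutes with complex conjugation, this gives $\varphi^*[\omega_E]^\alpha=2\pi i\,c_E\,\omega_f^\alpha$ in $H_P^1(\Gamma,\mathbb{C})$, where $[\omega_E]^\alpha$ denotes the $\alpha$-part of the class of $\omega_E$. After replacing $\varphi$ by the optimal parametrisation in the isogeny class of $E$ --- which changes $\Omega_E^\alpha$ and $c_E$ only by $p$-adic units, since $E[p]$ irreducible forces $E$ to be linked to its optimal curve by an isogeny of degree prime to $p$ --- we may assume that $\varphi_*\colon H_1(X_1(M),\mathbb{Z})\to H_1(E,\mathbb{Z})$ is surjective. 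Fix a generator $\gamma_\alpha$ of $H_1(E,\mathbb{Z})^\alpha$ and a lift $\tilde\gamma_\alpha\in H_1(X_1(M),O)^\alpha_{\mathfrak m}$. Then
\[
\Omega_E^\alpha=\int_{\gamma_\alpha}\omega_E=\big\langle\varphi^*[\omega_E]^\alpha,\tilde\gamma_\alpha\big\rangle=2\pi i\,c_E\,\langle\omega_f^\alpha,\tilde\gamma_\alpha\rangle=2\pi i\,c_E\,\Omega_f^\alpha\,\langle\delta_f^\alpha,\tilde\gamma_\alpha\rangle ,
\]
the last step being the defining relation $\omega_f^\alpha=\Omega_f^\alpha\delta_f^\alpha$. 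Since $c_E$ is a $p$-adic unit by Mazur's theorem (an odd prime dividing the Manin constant would produce a $\mathbb{Q}$-rational $p$-isogeny on $E$, contradicting irreducibility of $E[p]$), the proposition reduces to showing that $\langle\delta_f^\alpha,\tilde\gamma_\alpha\rangle\in O^\times$.

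To prove this, localise the perfect pairing $H_P^1(\Gamma,O)^\alpha_{\mathfrak m}\times H_1(X_1(M),O)^\alpha_{\mathfrak m}\to O$ and pass to $I_f$-torsion: self-adjointness of the Hecke operators yields a perfect pairing $H_P^1(\Gamma,O)^\alpha_{\mathfrak m}[I_f]\times H_1(X_1(M),O)^\alpha_{\mathfrak m}/I_f\to O$. By Assumption \ref{mainassumption} and $\mathbb{T}_{\mathfrak m}/I_f\mathbb{T}_{\mathfrak m}\cong O$, the first factor is isomorphic to $\mathrm{Hom}_O(\mathbb{T}_{\mathfrak m}/I_f\mathbb{T}_{\mathfrak m},O)\cong O$, and under $\iota$ it is generated exactly by $\delta_f^\alpha$, which corresponds to $\lambda_f\in\mathrm{Hom}_O(\mathbb{T}_{\mathfrak m},O)$. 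Hence $\langle\delta_f^\alpha,\,\cdot\,\rangle$ identifies $H_1(X_1(M),O)^\alpha_{\mathfrak m}/I_f$ with $O$. On the other hand $\varphi_*$ is $I_f$-linear, because $I_f$ annihilates $H_1(E,O)^\alpha$ (the Hecke operators act on it through $\lambda_f$), so it induces a surjection $H_1(X_1(M),O)^\alpha_{\mathfrak m}/I_f\to H_1(E,O)^\alpha$ of free $O$-modules of rank $1$, hence an isomorphism; therefore $\tilde\gamma_\alpha$ maps to a generator of $H_1(X_1(M),O)^\alpha_{\mathfrak m}/I_f$, and $\langle\delta_f^\alpha,\tilde\gamma_\alpha\rangle$ is a unit. (The sign $-1$ in $(-2\pi i)\Omega_f^\alpha$ is itself a $p$-adic unit and plays no role.)

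The genuinely arithmetic input --- the only real obstacle --- is concentrated in two facts, both ultimately resting on Mazur's results on rational isogenies: that the Manin constant of $E$ is prime to $p$, and that $E$ differs from the optimal curve of its isogeny class by a prime-to-$p$ isogeny. The rest is formal, combining Assumption \ref{mainassumption}, the self-duality of the relevant Hecke modules, and the surjectivity of $\varphi_*$ on homology. One must of course be careful with normalisations --- of the Manin constant and of the pairing, and with the precise sense of Hecke self-adjointness (which holds on the nose away from $Mp$ and up to the Atkin--Lehner involution in general, neither version affecting $p$-integrality) --- but these raise no essential difficulty.
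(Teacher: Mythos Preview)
Your proof is correct and follows essentially the same route as the paper's: realise both periods via the modular parametrisation and the cap/evaluation pairing between $H_1(X_1(M),\mathbb{Z}_p)^\alpha$ and $H_P^1(\Gamma,\mathbb{Z}_p)^\alpha$, reduce to the optimal curve using irreducibility of $E[p]$, invoke that the Manin constant is a $p$-adic unit, and use Assumption~\ref{mainassumption} to see that the relevant pairing value is a unit. The one point where the paper is slightly more careful is the Manin constant: Mazur's result directly gives that the $X_0(N)$-optimal constant $c_0$ is a $p$-adic unit, and one then needs the further input (Greenberg--Vatsal, Proposition~(3.3)) that $c_0$ and the $X_1(N)$-optimal constant $c_1$ agree up to a $p$-adic unit---your parenthetical about a rational $p$-isogeny is not quite the mechanism here.
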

\begin{proof}
This is \cite[Proposition (3.1)]{invariants}. Note that, under the assumption that $E[p]$ is irreducible, the N\'eron periods of the elliptic curves in the isogeny class of $E$ are equal up to multiplication by a $p$-adic unit. Thus, the assumption of optimality may be removed from \emph{loc. cit}.

We briefly sketch the steps in the proof. Denote by $N$ the conductor of $E$. Let $E_0$ (resp. $E_1$) be the $X_0(N)$-optimal (resp. $X_1(N)$-optimal) curve in the isogeny class for $E$ and let $\pi_0:X_0(N)\to E_0$ (resp. $\pi_1:X_1(N)\to E_1$) be a strong (resp. optimal) parametrisation. Then $\pi_1^{*}(\omega_{E_1})=c_1((2\pi i)f(z)dz)$ for some $c_1\in \mathbb{Z}$ (cf. \cite[Theorem 1.6]{stevensstickelberger}). Similarly, $\pi_0^*(\omega_{E_0})=c_0((2\pi i)f(z)dz)$, where $c_0\in \mathbb{Z}$ is also known to be a $p$-adic unit by a result of Mazur \cite{mazurrationalprime}. It is then shown in \cite[Proposition (3.3)]{invariants} that $c_0$ and $c_1$ differ by a $p$-adic unit in $\mathbb{Z}_p$, from which we deduce what $c_1$ is a $p$-adic unit.

Finally, one considers the map $H_1(X_1(N), \mathbb{Z}_p)^{\alpha}\to \mathbb{C}$, which sends a $1$-cycle $\gamma$ to $\int_{\gamma}(2\pi i)f(z)dz=(2\pi i)\Omega_f^{\alpha} (\gamma \cap \delta_f^{\alpha})$ (note that we have $H^1_P(\Gamma,\mathbb{Z}_p)^{\alpha}\cong H^1(X_1(N),\mathbb{Z}_p)^{\alpha}$ \cite[\S 8.1]{shimuraauto} and $\cap$ denotes the cap product). Since $f$ is normalised, it is in particular non-zero modulo $p$ and hence there exists an element $\gamma\in H_1(X_1(N),\mathbb{Z}_p)^{\alpha}$ such that $\gamma \cap \delta_f^{\alpha}$ is a $p$-adic unit. Therefore the image of the integration map defined above is a free $\mathbb{Z}_p$-module of rank $1$, generated by $(2\pi i)\Omega_f^{\alpha}$. We similarly define a map $H^1(E_1,\mathbb{Z}_p)^{\alpha}\to \mathbb{C}$ by $\gamma\to \int_{\gamma}\omega_{E_1}$, whose image is spanned by $\Omega_{E_1}^{\alpha}$. It remains to notice that the images of the two integration maps coincide.
\end{proof}

Let $f=\sum_{n=1}^{\infty}a_nq^n$ be a normalised weight $2$ eigenform on $\Gamma_1(M)$. Say that $f$ is \emph{good ordinary at $p$} if $p\nmid M$ and the polynomial $X^2-a_pX+p$ has a unique root, say $\alpha_p$, of vanishing $p$-adic valuation. Let $\chi$ be a Dirichlet character of conductor $p^k m$, where $k\in\mathbb{Z}_{\geq 0}$ and $m\in \mathbb{Z}_{\geq 1}$, with $p\nmid m$. We define the $p$-adic multiplier of the pair $(f,\chi)$ as
\begin{equation*}
e_p(\chi)=\frac{1}{\alpha_p^k}\bigg(1-\frac{\bar{\psi}(p)}{\alpha_p}\bigg)\bigg(1-\frac{\psi(p)}{\alpha_p}\bigg)
\end{equation*} 
and denote by $\tau(\chi)$ and $\mathfrak{f}_{\chi}$ the Gauss sum and the conductor of $\chi$.

Let $\gamma^{cyc}$ be a topological generator of the Galois group of the $\mathbb{Z}_p$-cyclotomic extension of $\mathbb{Q}$.
\begin{mydef}[The $p$-adic $L$-function of a weight $2$ eigenform]
\label{def p-adic L function}
Let $f$ be a normalised cuspidal eigenform of weight $2$ on $\Gamma_1(M)$, for some integer $M$, and assume that $f$ is good ordinary at the odd prime $p$. Let $\chi$ be a Dirichlet character of sign $\alpha$ and $\Lambda=\mathbb{Z}_p[\chi][[T]]$. Assume that $\alpha$ is an admissible sign (cf. Assumption \ref{mainassumption}). The \emph{$p$-adic $L$-function of $f$ twisted by $\chi$} is the unique element $\mathcal{L}_p(f,\chi,T)\in\Lambda\otimes \mathbb{Q}_p$ satisfying the interpolation property
\begin{equation*}
\mathcal{L}_p(f,\chi,\zeta-1)=e_p(\chi\rho)\frac{\mathfrak{f}_{\chi\rho}}{\tau(\chi^{-1}\rho^{-1})}\cdot \frac{L(f,\chi^{-1}\rho^{-1},1)}{(-2\pi i)\Omega_f^{\alpha}},
\end{equation*}
for every root of unity of $p$-power order $\zeta$ and where $\rho$ is the Dirichlet character corresponding to the Galois character which maps $\gamma^{cyc}$ to $\zeta$.

Similarly, for a finite set of primes $\Sigma$ not containing $p$ we define the \emph{$\Sigma$-incomplete $p$-adic $L$-function of $f$ twisted by $\chi$} as the unique element $\mathcal{L}_p^{\Sigma}(f,\chi,T)\in\Lambda\otimes \mathbb{Q}_p$ satisfying the interpolation property
\begin{equation*}
\mathcal{L}_p^{\Sigma}(f,\chi,\zeta-1)=e_p(\chi\rho)\frac{\mathfrak{f}_{\chi\rho}}{\tau(\chi^{-1}\rho^{-1})}\cdot \frac{L^{\Sigma}(f,\chi^{-1}\rho^{-1},1)}{(-2\pi i)\Omega_f^{\alpha}},
\end{equation*}
where $\zeta$ and $\rho$ are as above and $L^{\Sigma}(f,\chi^{-1}\rho^{-1},s)$ is the $L$-function of $f$ twisted by $\chi^{-1}\rho^{-1}$ with the Euler factors at the primes in $\Sigma$ removed.

If $E$ is an elliptic curve over $\mathbb{Q}$ and $f$ is the corresponding newform, we write $\mathcal{L}_p(E,\chi,T)$ for $\frac{(-2\pi i)\Omega_f^{\alpha}}{\Omega_E^{\alpha}}\cdot \mathcal{L}_p(f,\chi,T)$ and $\mathcal{L}_p^{\Sigma}(E,\chi,T)$ for $\frac{(-2\pi i)\Omega_f^{\alpha}}{\Omega_E^{\alpha}}\cdot \mathcal{L}_p^{\Sigma}(f,\chi,T)$.
\end{mydef}
Let $f$, $\chi$ be as in Definition \ref{def p-adic L function}; let $\pi$ be a uniformiser for $\mathbb{Z}_p[\chi]$ and denote by $\nu_p$ the $p$-adic valuation on $\mathbb{Z}_p[\chi]$, normalised so that $\nu_p(p)=1$. We define the analytic $\mu$-invariant $\mu_{f,\chi}$ of $\mathcal{L}_p(f,\chi,T)$ as the $p$-adic valuation of the highest power of $\pi$ dividing  $\mathcal{L}_p(f,\chi,T)$ if $\mathcal{L}_p(f,\chi,T)\in\Lambda$ and as $-\min_{a}\{\nu_p(\pi^a):\pi^a\mathcal{L}_p(f,\chi,T)\in \Lambda\}$ otherwise. Given a finite set of primes $\Sigma\not\ni p$ we also define the analytic $\mu$-invariant of $\mathcal{L}_p^{\Sigma}(f,\chi,T)$, denoted $\mu_{f,\chi,\Sigma}$, in a similar way.

\begin{lemma}
\label{muinvariantsareequal}
Let $f,\chi,\Sigma$ be as in Definition \ref{def p-adic L function}. Then $\mu_{f,\chi}=\mu_{f,\chi,\Sigma}$.
\end{lemma}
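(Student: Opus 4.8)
The plan is to write $\mathcal{L}_p^{\Sigma}(f,\chi,T)$ as the product of $\mathcal{L}_p(f,\chi,T)$ with one power series $Q_\ell(T)$ for each $\ell\in\Sigma$ --- the $\ell$-th Euler factor of the twisted complex $L$-function transported into $\Lambda$ --- and then to prove that every $Q_\ell$ has trivial $\mu$-invariant; multiplicativity of the $\mu$-invariant then yields the equality $\mu_{f,\chi,\Sigma}=\mu_{f,\chi}$.

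For the product decomposition, note that removing from the Dirichlet series $L(f,\chi^{-1}\rho^{-1},s)$ its Euler factors at the primes of $\Sigma$ multiplies it by $\prod_{\ell\in\Sigma}E_\ell(s)$, where $E_\ell(s)$ is the inverse of the $\ell$-th local factor. Evaluating at $s=1$ and using that $\rho(\ell)=\zeta^{c_\ell}$, with $c_\ell\in\mathbb{Z}_p$ defined by $\mathrm{Frob}_\ell=(\gamma^{cyc})^{c_\ell}$ in the Galois group of the $\mathbb{Z}_p$-cyclotomic extension of $\mathbb{Q}$, turns $E_\ell(1)$ into the value at $T=\zeta-1$ of a power series
\[
Q_\ell(T)=1-a_\ell\,\chi^{-1}(\ell)\,\ell^{-1}(1+T)^{-c_\ell}+\varepsilon_f(\ell)\,\chi^{-2}(\ell)\,\ell^{-1}(1+T)^{-2c_\ell}
\]
when $\ell\nmid M\mathfrak{f}_{\chi}$ (here $\varepsilon_f$ denotes the nebentypus of $f$), by an analogous linear expression when $\ell$ divides $M$ exactly once, and $Q_\ell=1$ otherwise; all its coefficients are $p$-integral, since $\ell^{-1}$ and the relevant character values are $p$-adic units and $(1+T)^{-c_\ell}\in\Lambda$. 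Because the factors $e_p(\chi\rho)$, $\mathfrak{f}_{\chi\rho}$, $\tau(\chi^{-1}\rho^{-1})$ and $\Omega_f^{\alpha}$ in the interpolation formulae of Definition \ref{def p-adic L function} are common to $\mathcal{L}_p$ and $\mathcal{L}_p^{\Sigma}$, the two series $\mathcal{L}_p^{\Sigma}(f,\chi,T)$ and $\big(\prod_{\ell\in\Sigma}Q_\ell(T)\big)\mathcal{L}_p(f,\chi,T)$ take the same value at $T=\zeta-1$ for every root of unity $\zeta$ of $p$-power order; since a non-zero element of $\Lambda\otimes\mathbb{Q}_p$ has only finitely many zeros in the open unit disc while the points $\zeta-1$ accumulate at $0$, the two series are equal.

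It then remains to show $\mu(Q_\ell)=0$ for each $\ell\in\Sigma$, where $\mu$ denotes the $\nu_p$-valued Gauss norm (the minimum of the $\nu_p$-valuations of the coefficients). This norm is multiplicative --- reduction modulo the maximal ideal lands in a power series ring over the residue field, which is an integral domain --- and on $\Lambda\otimes\mathbb{Q}_p$ it agrees with the analytic $\mu$-invariant; hence $\mu_{f,\chi,\Sigma}=\mu_{f,\chi}+\sum_{\ell\in\Sigma}\mu(Q_\ell)$ with each $\mu(Q_\ell)\geq 0$, and it suffices to prove that $Q_\ell$ is non-zero modulo the maximal ideal. Suppose not; reducing the formula for $Q_\ell$ and setting $Y:=\overline{\chi^{-1}(\ell)}\,\bar\ell^{-1}\,\overline{(1+T)^{-c_\ell}}$, a unit power series over the residue field $k$, the vanishing reads $1-\bar a_\ell\,Y+\overline{\varepsilon_f(\ell)}\,\bar\ell\,Y^{2}=0$ (with the evident simplifications when $Q_\ell$ has degree $\leq 1$). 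Since $\ell\ne p$, the leading coefficient $\overline{\varepsilon_f(\ell)}\,\bar\ell$ is non-zero, so $Y$ would be a root of a quadratic over $\overline k$ and hence a constant; as $k[[T]]$ is a domain this forces $\overline{(1+T)^{-c_\ell}}$ to be a constant power series. But $\overline{(1+T)^{c}}=1$ in $k[[T]]$ only when $c=0$, because $(1+T)^{p^{n}}\equiv 1+T^{p^{n}}\not\equiv 1\pmod{p}$ for every $n\geq 0$; so $c_\ell=0$, i.e. $\ell$ splits completely in the $\mathbb{Z}_p$-cyclotomic extension, equivalently $\ell^{p-1}=1$ in $\mathbb{Z}_p^{\times}$ --- impossible for the rational prime $\ell\ne p$. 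Hence $\mu(Q_\ell)=0$ for all $\ell\in\Sigma$ and $\mu_{f,\chi,\Sigma}=\mu_{f,\chi}$.

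I expect this last step to be the only real difficulty, on two counts. First, it cannot be reduced to a statement about the single value $Q_\ell(0)$ --- the value at $s=1$ of the $\ell$-th Euler factor of $L(f,\chi^{-1},\,\cdot\,)$ --- because that number can genuinely be divisible by the uniformiser (for instance at a prime $\ell\equiv 1\pmod p$ of split multiplicative reduction of an elliptic curve), so one must instead establish the non-vanishing of $Q_\ell$ modulo the maximal ideal \emph{as a power series}. Second, one has to isolate the right elementary input for this, namely the non-constancy modulo $p$ of the group-like element $(1+T)^{c_\ell}$, which amounts to the fact that no rational prime other than $p$ splits completely in the cyclotomic $\mathbb{Z}_p$-extension. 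The remaining ingredients --- multiplicativity of the Gauss norm and the density of the cyclotomic specialisation points --- are routine.
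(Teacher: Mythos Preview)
Your proof is correct and follows the same approach as the paper: write $\mathcal{L}_p^{\Sigma}(f,\chi,T)=\mathcal{L}_p(f,\chi,T)\prod_{\ell\in\Sigma}\mathcal{P}_\ell(T)$ and show each $\mathcal{P}_\ell\in\Lambda\setminus\pi\Lambda$. The only difference is that the paper keeps the Euler factor in the split form $\mathcal{P}_\ell(T)=(1-\alpha_\ell\chi^{-1}(\ell)\ell^{-1}(1+T)^{f_\ell})(1-\beta_\ell\chi^{-1}(\ell)\ell^{-1}(1+T)^{f_\ell})$, which lands directly on your ``$(1+T)^{f_\ell}$ is constant, hence $f_\ell=0$'' contradiction without the intermediate quadratic-in-$Y$ step; beyond that the paper simply cites Greenberg--Vatsal for the trivial-character case, so your write-up is actually more detailed than the paper's own.
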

\begin{proof}
This is a slight generalisation of \cite[pp. 24-25]{invariants}, where the statement is proved for trivial character $\chi$. As in \cite{invariants}, for a prime $\ell\in \Sigma$, write $[(1-\alpha_{\ell}\ell^{-s})(1-\beta_{\ell}\ell^{-s})]^{-1}$ for the Euler factor at $\ell$ of the $L$-function $L(f,s)$. Then we have $\mathcal{L}_p^{\Sigma}(f,\chi,T)=\mathcal{L}_p(f,\chi,T)\cdot \prod_{\ell \in \Sigma}\mathcal{P}_{\ell}(T)$, where $\mathcal{P}_{\ell}(T)=(1-\alpha_{\ell}\chi^{-1}(\ell)\ell^{-1}(1+T)^{f_{\ell}})(1-\beta_{\ell}\chi^{-1}(\ell)\ell^{-1}(1+T)^{f_{\ell}})$ and where $f_{\ell}\in \mathbb{Z}_p$ is such that $(\gamma^{cyc})^{-f_{\ell}}$ is the Frobenius automorphism for $\ell$ in $\Gamma$. Moreover, $\mathcal{P}_{\ell}(T)\in \Lambda\setminus p\Lambda$ for all $\ell\in \Sigma$: if $\ell|\mathfrak{f}_{\chi}$, then it is clear; else $\chi^{-1}(\ell)$ is a $p$-adic unit and the argument is the same as for trivial character. 
\end{proof}

\begin{lemma}
\label{canonicalperiodsremovingEulerfactors}
Let $E$ be an elliptic curve of level $N$, let $f$ the corresponding newform and let $p$ be an odd prime. Suppose $f$ is good ordinary at $p$ and let $\alpha$ be an admissible sign for $f$. Let $g$ be an eigenform obtained from $f$ by deleting all Euler factors corresponding to the primes in a finite set $\Sigma\not\ni p$ and assume that $\alpha$ is also admissible for $g$. Then the canonical periods $\Omega_f^{\alpha}$ and $\Omega_g^{\alpha}$ are equal up to multiplication by a $p$-adic unit. 
\end{lemma}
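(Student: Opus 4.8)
The plan is to realise the defining identities $\omega_{f}^{\alpha}=\Omega_{f}^{\alpha}\delta_{f}^{\alpha}$ and $\omega_{g}^{\alpha}=\Omega_{g}^{\alpha}\delta_{g}^{\alpha}$ inside one integral cohomology group, by writing $g$ as a combination of degeneracy‑map pullbacks of $f$ and showing that this combination carries $\delta_{f}^{\alpha}$ to a $\mathfrak{p}$‑primitive class on the $g$‑side; equality of the periods up to a $p$‑adic unit then follows. Concretely, write $V_{\ell}$ for the operator $q\mapsto q^{\ell}$ on $q$‑expansions; the eigenform obtained from $f$ by deleting the Euler factors at the primes $\ell\in\Sigma$ (so that $L(g,s)=L^{\Sigma}(f,s)$) is
\[
g=\Big(\prod_{\ell\in\Sigma}\big(1-a_{\ell}V_{\ell}+\ell V_{\ell}^{2}\big)\Big)f\in S_{2}(\Gamma_{1}(M),O),\qquad M:=N\prod_{\ell\in\Sigma}\ell^{2},
\]
an eigenform with the Hecke eigenvalues of $f$ away from $\Sigma$ and $U_{\ell}$‑eigenvalue $0$ for $\ell\in\Sigma$; in particular $p\nmid M$, $g$ is good ordinary at $p$ with unit root $\alpha_{p}$, and $f$ and $g$ give the same residual representation $E[p]$, so the associated maximal ideals in $\mathbb{T}$ at levels $N$ and $M$ — both denoted $\mathfrak{m}$ — are non‑Eisenstein (as $E[p]$ is irreducible). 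Each monomial $\prod_{\ell}V_{\ell}^{j_{\ell}}f$ occurring in $g$ is, up to a power of $\ell$ and hence up to a $p$‑adic unit, the pullback of $f\,dz$ along a degeneracy map $X_{1}(M)\to X_{1}(N)$; summing, $g\,dz=\Pi^{*}(f\,dz)$ for a $\mathbb{Z}_{(p)}$‑linear combination $\Pi^{*}$ of such pullbacks. Since pullback in singular cohomology is integral, $\Pi^{*}$ induces a map $H_{P}^{1}(\Gamma_{1}(N),O)\to H_{P}^{1}(\Gamma_{1}(M),O)$ preserving the $\pm$‑eigenspaces, commuting with the Hecke operators at primes not dividing $M$, and sending, on complex cohomology, the de Rham class $\omega_{f}$ of $f\,dz$ to the class $\omega_{g}$ of $g\,dz$.

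From $\Pi^{*}\omega_{f}=\omega_{g}$, the compatibility of $\Pi^{*}$ with complex conjugation, and the identities $\omega_{f}^{\alpha}=\Omega_{f}^{\alpha}\delta_{f}^{\alpha}$, $\omega_{g}^{\alpha}=\Omega_{g}^{\alpha}\delta_{g}^{\alpha}$, one obtains $\Omega_{f}^{\alpha}\,\Pi^{*}(\delta_{f}^{\alpha})=\Omega_{g}^{\alpha}\,\delta_{g}^{\alpha}$, that is
\[
\Pi^{*}(\delta_{f}^{\alpha})=\frac{\Omega_{g}^{\alpha}}{\Omega_{f}^{\alpha}}\,\delta_{g}^{\alpha}
\]
in $H_{P}^{1}(\Gamma_{1}(M),K)^{\alpha}$. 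Because $\iota$ is an isomorphism at both levels and $f,g$ are normalised, $\delta_{f}^{\alpha}$ and $\delta_{g}^{\alpha}$ are not divisible by $\mathfrak{p}$ in the respective integral $\alpha$‑eigenspaces. Hence it suffices to prove that $\Pi^{*}(\delta_{f}^{\alpha})$ is likewise not divisible by $\mathfrak{p}$ in $H_{P}^{1}(\Gamma_{1}(M),O)^{\alpha}$; then the two $\mathfrak{p}$‑primitive vectors span the same $O$‑line and $\nu_{p}(\Omega_{g}^{\alpha}/\Omega_{f}^{\alpha})=0$.

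The remaining point — which I expect to be the \emph{main obstacle} — is exactly that $\Pi^{*}\otimes O/\mathfrak{p}$ is injective on $H_{P}^{1}(\Gamma_{1}(N),O/\mathfrak{p})_{\mathfrak{m}}^{\alpha}$; since the parabolic cohomology of $\Gamma_{1}(\cdot)$ with $O$‑coefficients is $O$‑free, reduction modulo $\mathfrak{p}$ detects $\mathfrak{p}$‑divisibility, so this is equivalent to $\Pi^{*}(\delta_{f}^{\alpha})\not\equiv 0\pmod{\mathfrak{p}}$. For a single $\ell\in\Sigma$ this is Ihara's lemma in Ribet's form: after localising at the non‑Eisenstein $\mathfrak{m}$ the images of $\delta_{f}^{\alpha}$ under the distinct degeneracy maps $X_{1}(N\ell^{2})\to X_{1}(N)$ stay linearly independent modulo $\mathfrak{p}$, and the single‑prime operator $\Pi^{*}_{\ell}$ applied to $\delta_{f}^{\alpha}$ is a combination of these whose coefficient along the natural (``$z\mapsto z$'') map is a unit; one then iterates over $\Sigma$, noting that all intermediate maximal ideals still arise from $E[p]$ and so remain non‑Eisenstein (the admissibility of $\alpha$ is needed only at the two endpoints $f$ and $g$). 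The genuine work is bookkeeping: matching the classical statement of Ihara's lemma — phrased for $\mathbb{F}_{p}$‑cohomology and specific degeneracy maps — with the precise combination $\Pi^{*}$, the $\pm$‑eigenspaces and the localisation, and using the $q$‑expansion principle to confirm that the relevant classes are genuinely independent; alternatively one may invoke the corresponding statement on canonical periods under degeneracy maps from \cite{vatsalintegralperiods}. (As a consistency check, comparing interpolation properties gives $\Omega_{g}^{\alpha}\,\mathcal{L}_{p}(g,\chi,T)=\Omega_{f}^{\alpha}\,\mathcal{L}_{p}^{\Sigma}(f,\chi,T)$ for $\chi$ of sign $\alpha$, so that by Lemma \ref{muinvariantsareequal} the assertion $\nu_{p}(\Omega_{g}^{\alpha}/\Omega_{f}^{\alpha})=0$ is the same as $\mu_{g,\chi}=\mu_{f,\chi}$.)
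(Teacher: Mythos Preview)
Your proposal is correct and is precisely the argument behind the reference the paper gives as its entire proof (namely \cite[\S 4.1]{vatsalintegralperiods}): express $g$ as a $\mathbb{Z}_{(p)}$-combination of degeneracy pullbacks of $f$, reduce to the $\mathfrak{p}$-primitivity of $\Pi^{*}(\delta_{f}^{\alpha})$, and conclude by Ihara's lemma at the non-Eisenstein $\mathfrak{m}$. The only minor slip is that for $\ell\in\Sigma$ with $\ell\mid N$ the Euler factor is linear, so the local operator should be $(1-a_{\ell}V_{\ell})$ rather than the quadratic one; this does not affect the structure of the argument.
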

\begin{proof}
See \cite[\S 4.1]{vatsalintegralperiods}. Note that the statement there is proved under the assumption that $f$ and $g$ are $p$-stabilised, but the argument also applies here, as we are dealing with forms of weight $2$, where one can remove the assumption of $p$-stabilisation (cf. \cite[Lemma 4.1]{vatsalintegralperiods}).
\end{proof}
We next state a result from \cite{vatsalintegralperiods} (Theorem 3.10) which we will need to establish a congruence between $p$-adic $L$-functions.
\begin{prop}
\label{mainvatsal}
Let $f$ and $g$ be normalised cuspidal eigenforms of weight $2$ and of the same level, say $M$, such that $K=\mathbb{Q}(f)=\mathbb{Q}(g)$. Suppose that, for some integer $r\geq 1$, $f$ and $g$ satisfy the congruence $f\equiv g \mod \mathfrak{p}^r$, where $\mathfrak{p}$ is a prime of $K$ above $p$ and that $\alpha$ is an admissible sign for $f$ (and hence for $g$). Then, for every Dirichlet character $\chi$ of sign $\alpha$, we have
\begin{equation*}
\frac{\mathfrak{f}_{\chi}}{\tau(\chi^{-1})}\cdot \frac{L(f,\chi^{-1},1)}{(-2\pi i)\Omega_f^{\alpha}}\equiv \frac{\mathfrak{f}_{\chi}}{\tau(\chi^{-1})}\cdot \frac{L(g,\chi^{-1},1)}{(-2\pi i)\Omega_g^{\alpha}} \mod \mathfrak{p}^r.
\end{equation*}
\end{prop}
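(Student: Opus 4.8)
The plan is to rewrite each of the two normalised special values as a finite sum of values of an \emph{integrally normalised modular symbol} — namely $\delta_f^{\alpha}$ in the first case and $\delta_g^{\alpha}$ in the second — paired against one and the same family of twisted cycles that depends only on $\chi$ and on the level $M$, and then to transport the congruence $f\equiv g\bmod\mathfrak{p}^{r}$ through this description. The input for the first step is the classical formula of Birch and Manin expressing twisted $L$-values in terms of periods of modular symbols: for a weight $2$ cusp form $h$ on $\Gamma=\Gamma_{1}(M)$ and a Dirichlet character $\chi$ of conductor $c=\mathfrak{f}_{\chi}$ and sign $\alpha=\chi(-1)$, the quantity $\frac{\mathfrak{f}_{\chi}}{\tau(\chi^{-1})}\cdot\frac{L(h,\chi^{-1},1)}{(-2\pi i)}$ equals, up to a constant depending only on $\chi$ and not on $h$, the value of the class $\omega_{h}^{\alpha}\in H_{P}^{1}(\Gamma,\mathbb{C})$ obtained by integrating $h(z)\,dz$ along the geodesics joining the cusp $0$ to the cusps $a/c$, weighted by $\chi(a)$; for $\chi=\mathbb{I}$ one uses instead the single symbol joining $0$ to $i\infty$. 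Since a parabolic cohomology class has a well-defined integral between any two cusps, depending only on the endpoints, each pairing $\langle\delta_{h}^{\alpha},\{0\to a/c\}\rangle$ makes sense and lies in $O$; and because $\omega_{h}^{\alpha}=\Omega_{h}^{\alpha}\delta_{h}^{\alpha}$ by the definition of the canonical period (Definition \ref{canonical}), dividing the complex expression above by $\Omega_{h}^{\alpha}$ simply replaces $\omega_{h}^{\alpha}$ by the integral class $\delta_{h}^{\alpha}$. One thus obtains, for $h\in\{f,g\}$, an identity of the shape
\[
\frac{\mathfrak{f}_{\chi}}{\tau(\chi^{-1})}\cdot\frac{L(h,\chi^{-1},1)}{(-2\pi i)\Omega_{h}^{\alpha}}=\sum_{a\bmod c}c_{a}\,\langle\delta_{h}^{\alpha},\{0\to a/c\}\rangle,
\]
with coefficients $c_{a}$ lying in the $O$-algebra $O[\chi]$ generated by the values of $\chi$ and independent of $h$, only $\delta_{f}^{\alpha}$ versus $\delta_{g}^{\alpha}$ differing between the two sides.

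Then I would transfer the congruence. Since $f\equiv g\bmod\mathfrak{p}^{r}$ means $f-g\in\mathfrak{p}^{r}S_{2}(\Gamma,O)$, and since $f$ and $g$ determine the same maximal ideal $\mathfrak{m}$, the sign $\alpha$ is admissible for both and we may fix a single isomorphism $\iota\colon S_{2}(\Gamma,O)_{\mathfrak{m}}\cong H_{P}^{1}(\Gamma,O)_{\mathfrak{m}}^{\alpha}$ and use it to define $\delta_{f}^{\alpha}=\iota(f)$, $\delta_{g}^{\alpha}=\iota(g)$ together with the associated periods $\Omega_{f}^{\alpha},\Omega_{g}^{\alpha}$. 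As $\iota$ is $O$-linear, $\delta_{f}^{\alpha}-\delta_{g}^{\alpha}=\iota(f-g)\in\mathfrak{p}^{r}H_{P}^{1}(\Gamma,O)_{\mathfrak{m}}^{\alpha}$, and since the pairing with a modular symbol is $O$-linear in the cohomological variable, $\langle\delta_{f}^{\alpha},\{0\to a/c\}\rangle\equiv\langle\delta_{g}^{\alpha},\{0\to a/c\}\rangle\bmod\mathfrak{p}^{r}$ for every $a$. Multiplying by $c_{a}$, summing over $a$, and invoking the displayed identity for $f$ and for $g$ yields the asserted congruence modulo $\mathfrak{p}^{r}O[\chi]$. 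I would add the remark that this requires the canonical periods of $f$ and $g$ to be normalised via the same $\iota$; this is harmless, since replacing $\iota$ by $u\iota$ for a unit $u\in\mathbb{T}_{\mathfrak{m}}^{\times}$ multiplies $\Omega_{f}^{\alpha}$ and $\Omega_{g}^{\alpha}$ by $\lambda_{f}(u)^{-1}$ and $\lambda_{g}(u)^{-1}$ respectively, and $\lambda_{f}(u)\equiv\lambda_{g}(u)\bmod\mathfrak{p}^{r}$.

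The only step that is not purely formal is the first: one must pin down the Birch--Manin dictionary with all of its normalisations, and in particular check that the arithmetic factor $\mathfrak{f}_{\chi}/\tau(\chi^{-1})$, combined with division by the canonical period, turns the special value into an $O[\chi]$-integral quantity that is a \emph{fixed} $O[\chi]$-linear combination — the same for every weight $2$ eigenform of level $M$ — of the modular-symbol pairings $\langle\delta_{h}^{\alpha},\{0\to a/c\}\rangle$. This integrality is exactly what Vatsal's theory of canonical periods supplies, and is the reason the hypothesis must be a congruence of forms together with a compatible choice of integral period rather than merely a congruence of normalised $L$-values. Once it is granted, the remaining two steps are immediate, and I would present the proof by quoting the integral modular-symbol formula from \cite{vatsal} and \cite{vatsalintegralperiods} and then running the short transfer argument above; this is in essence the argument of \cite[Theorem 3.10]{vatsalintegralperiods}.
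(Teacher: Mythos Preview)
Your proposal is correct and follows precisely the argument of Vatsal in \cite[Theorem 3.10]{vatsalintegralperiods}, which is all the paper does as well: the paper does not supply its own proof of this proposition but simply quotes it from Vatsal. In that sense your write-up is \emph{more} detailed than the paper's, but the underlying approach---express the normalised twisted $L$-value via the Birch--Manin formula as an $O[\chi]$-linear combination of modular-symbol pairings against the integral class $\delta_h^{\alpha}$, use a single isomorphism $\iota$ to define both $\delta_f^{\alpha}$ and $\delta_g^{\alpha}$, and transport the congruence $f\equiv g\bmod\mathfrak{p}^r$ through $\iota$---is exactly the one the paper is invoking by citation.
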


\begin{cor}
\label{congruencepadic}
With the assumptions of Proposition \ref{mainvatsal}, suppose further that $f$ and $g$ are good ordinary at $p$ and denote by $O$ the ring of integers of $K$. Then
\begin{equation*}
\mathcal{L}_p(f,\chi,\zeta-1) \equiv \mathcal{L}_p(g,\chi,\zeta-1) \mod \mathfrak{p}^r
\end{equation*}
for all $p$-power roots of unity $\zeta$. Therefore
\begin{equation*}
\mathcal{L}_p(f,\chi,T)\equiv \mathcal{L}_p(g,\chi,T) \mod \mathfrak{p}^rO(\chi)[[T]].
\end{equation*}
\end{cor}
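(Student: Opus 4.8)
The plan is to read off each pointwise congruence $\mathcal L_p(f,\chi,\zeta-1)\equiv\mathcal L_p(g,\chi,\zeta-1)\bmod\mathfrak p^r$ from the interpolation property of Definition \ref{def p-adic L function} combined with Proposition \ref{mainvatsal}, and then to upgrade these to a congruence of power series by a Weierstrass-preparation argument. Fix a $p$-power root of unity $\zeta$ and let $\rho$ be the associated Dirichlet character, i.e.\ the one attached to the Galois character $\gamma^{cyc}\mapsto\zeta$. Since the cyclotomic $\mathbb Z_p$-extension of $\mathbb Q$ is totally real --- complex conjugation corresponds to $-1\in\mu_{p-1}\subset\mathbb Z_p^\times$, which is trivial in the quotient $\mathbb Z_p^\times\twoheadrightarrow 1+p\mathbb Z_p$ cutting out that extension --- the character $\rho$ is even, so $\chi\rho$ still has sign $\alpha$ and Proposition \ref{mainvatsal} applies to it. Writing $A_h:=\frac{\mathfrak f_{\chi\rho}}{\tau(\chi^{-1}\rho^{-1})}\cdot\frac{L(h,\chi^{-1}\rho^{-1},1)}{(-2\pi i)\Omega_h^\alpha}$ for $h\in\{f,g\}$, the interpolation formula reads $\mathcal L_p(h,\chi,\zeta-1)=e_p(\chi\rho)\,A_h$, where $e_p(\chi\rho)=\alpha_p^{-k}\bigl(1-\bar\psi(p)/\alpha_p\bigr)\bigl(1-\psi(p)/\alpha_p\bigr)$ depends on $h$ only through the unit root $\alpha_p=\alpha_p(h)$ of $X^2-a_p(h)X+p$ (the integer $k$ and the prime-to-$p$ part $\psi$ are those of $\chi\rho$, hence independent of $h$). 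By Proposition \ref{mainvatsal} we have $A_f\equiv A_g\bmod\mathfrak p^r$, and both are $\mathfrak p$-integral by the theory of integral canonical periods of \cite{vatsalintegralperiods}.

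It remains, for this first step, to compare the multipliers $e_p(\chi\rho)$ attached to $f$ and to $g$. The congruence $f\equiv g\bmod\mathfrak p^r$ gives $a_p(f)\equiv a_p(g)\bmod\mathfrak p^r$, and subtracting the identities $\alpha_p(h)=a_p(h)-p/\alpha_p(h)$ yields $(\alpha_p(f)-\alpha_p(g))\bigl(1-p/(\alpha_p(f)\alpha_p(g))\bigr)=a_p(f)-a_p(g)$; since $\alpha_p(f),\alpha_p(g)$ are units, the factor in parentheses on the left is a unit, so $\alpha_p(f)\equiv\alpha_p(g)\bmod\mathfrak p^r$ and likewise $\alpha_p(f)^{-1}\equiv\alpha_p(g)^{-1}\bmod\mathfrak p^r$. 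As $\psi(p)$ is $0$ or a root of unity and $\alpha_p^{-k}$ is a $\mathfrak p$-adic unit, the product formula above shows that the two multipliers are congruent modulo $\mathfrak p^r$ and are both $\mathfrak p$-integral. Denoting them $e_p^f$ and $e_p^g$, we conclude $\mathcal L_p(f,\chi,\zeta-1)=e_p^fA_f\equiv e_p^gA_f\equiv e_p^gA_g=\mathcal L_p(g,\chi,\zeta-1)\bmod\mathfrak p^r$, which is the first assertion of the corollary. Note that one genuinely multiplies two congruences here, as $e_p^f\neq e_p^g$ in general; this is legitimate precisely because all the quantities in play are $\mathfrak p$-integral.

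For the second assertion set $H(T):=\mathcal L_p(f,\chi,T)-\mathcal L_p(g,\chi,T)$, which we may assume is nonzero. It lies in $O(\chi)[[T]]\otimes\mathbb Q_p$, and by Weierstrass preparation $H=\pi^\mu U(T)P(T)$ with $\pi$ a uniformiser of $O(\chi)$, $\mu\in\mathbb Z$, $U$ a unit in $O(\chi)[[T]]$ and $P$ a distinguished polynomial of some degree $\lambda$; showing $H\in\mathfrak p^rO(\chi)[[T]]$ is equivalent to $v_{\mathfrak p}(\pi^\mu)\geq r$. Suppose not. For $\zeta$ of order $p^n$ one has $v_{\mathfrak p}(\zeta-1)=1/\varphi(p^n)\to 0$ as $n\to\infty$, so for $n$ large this valuation is strictly smaller than the (positive) valuation of every root of $P$; then $v_{\mathfrak p}(P(\zeta-1))=\lambda/\varphi(p^n)$, and $v_{\mathfrak p}(H(\zeta-1))=v_{\mathfrak p}(\pi^\mu)+\lambda/\varphi(p^n)<r$ once $n$ is large enough, contradicting the pointwise congruence just proved. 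Hence $H\in\mathfrak p^rO(\chi)[[T]]$, as required. (In the situations where this corollary will be used the $p$-adic $L$-functions are themselves integral by \cite{wuthrichintegral}, so one in fact has $H\in O(\chi)[[T]]$ from the outset, but this is not needed above.)

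I expect no serious obstacle: the genuine content is the correct application of Proposition \ref{mainvatsal} to the twisted character $\chi\rho$, while the two points requiring care are bookkeeping --- checking that $\chi\rho$ has sign $\alpha$ so that the proposition applies, and handling the fact that the $p$-adic multipliers of $f$ and $g$ differ (but only by a factor congruent to $1$ modulo $\mathfrak p^r$, so that the product of the two congruences survives). The final passage from the pointwise congruences to a congruence of power series is the standard Weierstrass-preparation argument and presents no difficulty.
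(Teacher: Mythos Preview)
Your proof is correct and follows the same approach as the paper's: deduce the congruence of unit roots from $a_p(f)\equiv a_p(g)\bmod\mathfrak p^r$ (the paper invokes Hensel's lemma, you make the computation explicit), then combine with Proposition \ref{mainvatsal} and the interpolation formula to obtain the pointwise congruences, and finally pass to power series. For that last step the paper simply cites \cite[Theorem (1.10)]{vatsal}, whereas you spell out the standard Weierstrass-preparation argument; your extra care in checking that $\chi\rho$ has sign $\alpha$ and that all quantities are $\mathfrak p$-integral before multiplying congruences fills in details the paper leaves implicit.
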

\begin{proof}
Let $a_p$ respectively $b_p$ be the eigenvalue of $f$ resp. $g$ for the Hecke operator $T_p$. Since $f\equiv g \mod \mathfrak{p}^r$, in particular we know that $a_p\equiv b_p \mod \mathfrak{p}^r$. By Hensel's lemma, this implies that the unique $p$-adic unit root of $X^2-a_pX+p$ is congruent modulo $\mathfrak{p}^r$ to the unique $p$-adic unit root of $X^2-b_pX+p$, which, together with the expression for $\mathcal{L}_p(f,\chi,\zeta-1)$ from Definition \ref{def p-adic L function} and with Proposition \ref{mainvatsal}, implies the first congruence. \\
As for the second part of the proof, see \cite[Theorem (1.10)]{vatsal}. 
\end{proof}
For a normalised cuspidal eigenform $f$ with rational coefficients, let $\bar{\rho}_f:\text{Gal}(\bar{\mathbb{Q}}/\mathbb{Q})\to \GL_2(\mathbb{F}_p)$ be the unique semisimple representation satisfying $\text{Tr}(\bar{\rho_f}(\text{Frob}_q))\equiv a_q \mod p$ for all $q\nmid Np$. We say that $f$ is \emph{residually irreducible} if $\bar{\rho}_f$ is irreducible.
\begin{lemma}
\label{corollaryeulerremoved2}
Let $f$ be a newform of weight $2$ on $\Gamma_1(N)$  with rational coefficients, $p$ an odd prime such that $p\nmid N$ and suppose that the representation attached to $f$ is residually irreducible. Let $S$ be a (possibly empty) finite set of primes not containing $p$ and let $g_S$ be the eigenform obtained from $f$ by removing the Euler factors of $f$ corresponding to $S$. Then Assumption \ref{mainassumption} holds for $g_S$ for both $\alpha=1$ and $\alpha=-1$.
\end{lemma}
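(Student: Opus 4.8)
\emph{Proof plan.} The plan is to argue by induction on $|S|$, reducing to the base case $S=\emptyset$. When $S=\emptyset$ the form $g_\emptyset$ is the newform $f$ itself, of level $N$ with $p\nmid N$, and since $\bar\rho_f$ is irreducible the maximal ideal $\mathfrak{m}$ it cuts out in the level-$N$ Hecke algebra is non-Eisenstein. Here $H^1_P(\Gamma_1(N),\mathbb{Z}_p)$ is the Betti realisation of the $p$-adic Tate module of $J_1(N)$, and I would appeal to the classical results of Mazur, Ribet and Wiles: for $p\nmid N$ and non-Eisenstein $\mathfrak{m}$, the algebra $\mathbb{T}_{\mathfrak{m}}$ is Gorenstein and multiplicity one holds at $\mathfrak{m}$, which is exactly the assertion that $H^1_P(\Gamma_1(N),\mathbb{Z}_p)^{\pm}_{\mathfrak{m}}$ is free of rank $1$ over $\mathbb{T}_{\mathfrak{m}}$ for each sign, i.e.\ Assumption \ref{mainassumption} for $f$ with both $\alpha=1$ and $\alpha=-1$. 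This is also part of the input to \cite[\S4]{vatsalintegralperiods}.

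For the inductive step, write $S=S_0\sqcup\{\ell\}$ and assume the conclusion for $g_{S_0}$, of level $M$; note that $\ell\neq p$. Removing the Euler factor at $\ell$ raises the level by a factor $\ell$ or $\ell^2$ (or not at all, if that factor is already trivial), to some $M'$, and yields an eigenform $g_S$ with the same Hecke eigenvalues away from $\ell$ and with $U_\ell$-eigenvalue $0$. The structural point is the $\ell$-old subspace of $H^1_P(\Gamma_1(M'),\mathbb{Z}_p)$: it is the image of a direct sum of copies of $H^1_P(\Gamma_1(M),\mathbb{Z}_p)$ under the degeneracy maps at $\ell$, and on it $U_\ell$ acts $\mathbb{T}(M)$-linearly with characteristic polynomial $X(X-U_\ell)$, respectively $X(X^2-T_\ell X+\ell)$, over $\mathbb{T}(M)$. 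Because $\ell\neq p$, the non-linear factor is a unit at $X=0$ modulo $p$, so $X=0$ is a simple root of the characteristic polynomial modulo the maximal ideal; hence the kernel of $U_\ell$ on the $\ell$-old subspace splits off as a $\mathbb{T}(M)$-stable direct summand, which a suitable combination of the degeneracy maps identifies with a single copy of $H^1_P(\Gamma_1(M),\mathbb{Z}_p)$, compatibly with all Hecke operators away from $\ell$ and with complex conjugation, and inside which $\delta_{g_S}^{\alpha}$ lies.

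It then remains to localise at $\mathfrak{m}'$, the maximal ideal determined by $g_S$, which contains $U_\ell$. Using the irreducibility of $\bar\rho_f$ and the known restriction of $\bar\rho_f$ to a decomposition group at $\ell$ (unramified when $\ell\nmid N$, of the standard Steinberg shape when $\ell\mid N$), one checks that no eigenform genuinely new at $\ell$ is congruent to $g_S$ modulo $\mathfrak{p}$, so that the $\mathfrak{m}'$-localisation of $H^1_P(\Gamma_1(M'),\mathbb{Z}_p)$ is supported on the $\ell$-old subspace; together with the previous paragraph, the degeneracy maps then induce compatible isomorphisms $\mathbb{T}(M')_{\mathfrak{m}'}\xrightarrow{\sim}\mathbb{T}(M)_{\mathfrak{m}}$ and $H^1_P(\Gamma_1(M'),\mathbb{Z}_p)^{\pm}_{\mathfrak{m}'}\xrightarrow{\sim}H^1_P(\Gamma_1(M),\mathbb{Z}_p)^{\pm}_{\mathfrak{m}}$, carrying Assumption \ref{mainassumption} from $g_{S_0}$ to $g_S$ for both signs. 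I expect this last step — matching the localised Hecke algebras and cohomology groups, i.e.\ ruling out new-at-$\ell$ contributions and tracking the degeneracy maps after localisation — to be the main obstacle; it is carried out in \cite[\S4]{vatsalintegralperiods}, on which I would rely, and everything else is formal once the Gorenstein and multiplicity-one input for the newform is in hand.
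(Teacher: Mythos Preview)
Your inductive strategy runs into a genuine obstruction at the step where you assert that ``no eigenform genuinely new at $\ell$ is congruent to $g_S$ modulo $\mathfrak{p}$.'' This is false in general: Ribet's level-raising theorem says precisely that when $\ell\nmid M$ and $a_\ell(f)\equiv\pm(\ell+1)\pmod p$, there \emph{does} exist a newform $h$ of level $M\ell$ with $\bar\rho_h\cong\bar\rho_f$. That $\bar\rho_f$ is unramified at $\ell$ does not prevent this, since $\bar\rho_h|_{D_\ell}$ can be unramified even though $\rho_h|_{D_\ell}$ is Steinberg. At level $M'=M\ell^2$ the two-dimensional oldspace attached to such an $h$ carries a $U_\ell=0$ eigenline, which therefore contributes to the $\mathfrak{m}'$-localisation; hence $\mathbb{T}(M')_{\mathfrak{m}'}$ is strictly larger than $\mathbb{T}(M)_{\mathfrak{m}}$ and your claimed isomorphisms of localised Hecke algebras and cohomology groups fail. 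The reference you give, \cite[\S4]{vatsalintegralperiods}, compares canonical \emph{periods} across the degeneracy maps but does not purport to rule out level-raising, so it will not close this gap.

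The paper's proof sidesteps the issue entirely by observing that the very input you invoke for the base case --- Wiles's Gorenstein/multiplicity-one theorem, \cite[Theorem~2.1(i)]{wilesmodularity} --- already applies at \emph{any} level prime to $p$ with non-Eisenstein maximal ideal, not just at the newform level $N$. Since $\bar\rho_{g_S}\cong\bar\rho_f$ is irreducible and the level of $g_S$ is prime to $p$ (because $p\notin S$ and $p\nmid N$), one cites that theorem once, directly for $g_S$, and is done; no induction or comparison across levels is needed.
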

\begin{proof}
We have $\bar{\rho}_{g_S}\cong \bar{\rho}_f$ is irreducible. Furthermore, the level of $g_S$ is not divisible by $p$ so the claim follows directly from \cite[Theorem 2.1 (i)]{wilesmodularity}.
\end{proof}

\section{Proof of the main result}
The theorem is a corollary of the following generalisation of \cite[Theorem (3.10)]{invariants}.
\begin{thm}
\label{conggen}
Let $E_1$ and $E_2$ be elliptic curves of level $N_1$ and $N_2$ and let $i$ be a positive integer such that $E_1[p^i]\cong E_2[p^i]$ as Galois modules and $E_j[p]$ is irreducible for $j=1,2$. Assume that $E_j$ has good ordinary reduction at $p$. Denote by $\Sigma$ the set of primes dividing $N_1N_2$. Then
\begin{equation*}
\mathcal{L}^{\Sigma}(E_1,\chi,T)\equiv u\cdot \mathcal{L}^{\Sigma}(E_2,\chi,T)\ (\text{mod}\ p^i\Lambda) 
\end{equation*}
for every Dirichlet character $\chi$ and some $u\in \mathbb{Z}_p[\chi]^*$. Here $\Lambda=\mathbb{Z}_p[\chi][[T]]$.
\end{thm}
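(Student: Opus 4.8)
The plan is to deduce Theorem~\ref{conggen} from Corollary~\ref{congruencepadic}, which manufactures a congruence of $p$-adic $L$-functions out of a congruence between two eigenforms of the \emph{same} level; the work lies in producing such forms from $E_1$ and $E_2$ and in matching the output with the $\Sigma$-incomplete $p$-adic $L$-functions in the statement. First I would pass to the newforms $f_1,f_2$ of $E_1,E_2$, which have rational coefficients, so that throughout $K=\mathbb{Q}_p$, $O=\mathbb{Z}_p$ and $\mathfrak{p}=(p)$. From $E_1[p^i]\cong E_2[p^i]$ as Galois modules one reads off $a_q(f_1)\equiv a_q(f_2)\pmod{p^i}$ for $q\nmid pN_1N_2$ by comparing traces of $\mathrm{Frob}_q$; and since $E_j$ is ordinary at $p$, the rank-one unramified quotient of $E_j[p^i]$ as a module over the decomposition group at $p$ is canonical and $\mathrm{Frob}_p$ acts on it by the reduction of the unit root $\alpha_p(f_j)$ of $X^2-a_p(f_j)X+p$, so the isomorphism forces $\alpha_p(f_1)\equiv\alpha_p(f_2)$, hence (by Hensel, as in the proof of Corollary~\ref{congruencepadic}) $a_p(f_1)\equiv a_p(f_2)\pmod{p^i}$.

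Next I would realise $E_1$ and $E_2$ at one common level prime to $p$. For each $j$, delete from $f_j$ the Euler factor at every prime $\ell\in\Sigma$ (replacing $f_j$ by $f_j(z)-a_\ell(f_j) f_j(\ell z)$ if $\ell$ exactly divides $N_j$, by $f_j(z)-a_\ell(f_j) f_j(\ell z)+\ell f_j(\ell^2 z)$ if $\ell\nmid N_j$, and doing nothing if $\ell^2\mid N_j$, where $a_\ell(f_j)=0$), then use the degeneracy maps $h(z)\mapsto h(z)$ to reach level exactly $M:=\prod_{\ell\in\Sigma}\ell^{\max\{2,\,v_\ell(N_1),\,v_\ell(N_2)\}}$, where $v_\ell$ is the $\ell$-adic valuation. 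Call the resulting form $g_j$: it is a normalised cuspidal eigenform of weight $2$ and level $M$, good ordinary at $p$ with $\alpha_p(g_j)=\alpha_p(f_j)$, with $a_q(g_j)=a_q(f_j)$ for $q\notin\Sigma$ and $U_\ell g_j=0$ for $\ell\in\Sigma$; consequently $L(g_j,\psi,s)=L^{\Sigma}(f_j,\psi,s)$ for every Dirichlet character $\psi$ (the primes $\ell\in\Sigma$ dividing $\mathfrak{f}_\psi$ requiring a separate check), and $g_1\equiv g_2\pmod{p^i}$ on $q$-expansions by the previous paragraph. Finally $\bar\rho_{g_j}\cong\bar\rho_{f_j}=\bar\rho_{E_j}$ is irreducible, so Assumption~\ref{mainassumption} holds for $g_1$ and $g_2$ and for both signs, by the argument of Lemma~\ref{corollaryeulerremoved2} (Wiles' freeness theorem, which is available at level $M$ because the maximal ideal cut out is non-Eisenstein).

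Now Corollary~\ref{congruencepadic} applied to $g_1,g_2$ with $r=i$ gives, for every Dirichlet character $\chi$ (both signs being admissible), $\mathcal{L}_p(g_1,\chi,T)\equiv\mathcal{L}_p(g_2,\chi,T)\pmod{p^i\mathbb{Z}_p[\chi][[T]]}$. To conclude I would rewrite both sides. Comparing the interpolation formula of Definition~\ref{def p-adic L function} for $\mathcal{L}_p(g_j,\chi,T)$ and for $\mathcal{L}_p^{\Sigma}(f_j,\chi,T)$ — the multiplier $e_p$ and the Gauss-sum factors coincide since $\alpha_p(g_j)=\alpha_p(f_j)$ and the characters agree, while $L(g_j,\psi,s)=L^{\Sigma}(f_j,\psi,s)$ — together with Lemma~\ref{canonicalperiodsremovingEulerfactors} (so $\Omega_{g_j}^\alpha$ and $\Omega_{f_j}^\alpha$ differ by a $p$-adic unit), one obtains $\mathcal{L}_p(g_j,\chi,T)=u'_j\,\mathcal{L}_p^{\Sigma}(f_j,\chi,T)$ with $u'_j\in\mathbb{Z}_p[\chi]^\times$; while Definition~\ref{def p-adic L function} and Proposition~\ref{neronequalf} give $\mathcal{L}_p^{\Sigma}(E_j,\chi,T)=w_j\,\mathcal{L}_p^{\Sigma}(f_j,\chi,T)$ with $w_j\in\mathbb{Z}_p^\times$. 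Substituting and clearing units turns the congruence for the $g_j$ into $\mathcal{L}_p^{\Sigma}(E_1,\chi,T)\equiv u\,\mathcal{L}_p^{\Sigma}(E_2,\chi,T)\pmod{p^i\Lambda}$ with $u=w_1(u'_1)^{-1}u'_2 w_2^{-1}\in\mathbb{Z}_p[\chi]^\times$, which is the assertion.

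I expect the main obstacle to be the construction of the auxiliary forms $g_j$: one must realise $E_1$ and $E_2$ at a single level prime to $p$ while simultaneously preserving the mod-$p^i$ congruence on \emph{all} Fourier coefficients (including at $p$), keeping $L(g_j,\psi,s)$ equal to $L^{\Sigma}(f_j,\psi,s)$ for every twist $\psi$, and ensuring Assumption~\ref{mainassumption} survives the degeneracy maps used to equalise the levels — this last point being precisely where the irreducibility of $E_j[p]$ is genuinely used.
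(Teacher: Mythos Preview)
Your proof is correct and follows essentially the same route as the paper's: pass to the newforms, establish $a_n\equiv b_n\pmod{p^i}$ for $(n,N_1N_2)=1$ (including $n=p$ via the ordinary filtration at $p$), strip the Euler factors at $\Sigma$ to obtain congruent eigenforms $g_1,g_2$ of a common level prime to $p$, invoke Lemma~\ref{corollaryeulerremoved2} for admissibility, apply Corollary~\ref{congruencepadic}, and translate back via Lemma~\ref{canonicalperiodsremovingEulerfactors} and Proposition~\ref{neronequalf}. The only quibble is that your appeal to Hensel for $\alpha_p(f_1)\equiv\alpha_p(f_2)\Rightarrow a_p(f_1)\equiv a_p(f_2)$ runs in the wrong direction---the paper instead writes $\beta_p=p/\alpha_p$ and sums---but the implication is immediate either way, and your choice of common level $\prod_{\ell\in\Sigma}\ell^{\max\{2,v_\ell(N_1),v_\ell(N_2)\}}$ is just a tighter variant of the paper's $\operatorname{lcm}(N_1,N_2)^2$.
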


\begin{proof}
Denote by $M$ the least common multiple of $N_1$ and $N_2$ and by $f_1=\sum_{n=1}^{\infty} a_nq^n$ and $f_2=\sum_{n=1}^{\infty} b_n q^n$ the newforms corresponding to $E_1$ and $E_2$ by modularity. Furthermore, let $\rho_{E_1,p^i}$ denote the Galois representation attached to $E_1[p^i]$ and similarly for $E_2$. For all primes $\ell$ at which $\rho_{E_1,p^i}$ resp. $\rho_{E_2,p^i}$ is unramified, we have
\begin{equation*}
\text{tr}(\rho_{E_1,p^i}(\text{Frob}_{\ell}))\equiv a_{\ell} \text{ mod }p^i 
\end{equation*}
resp.
\begin{equation*}
\text{tr}(\rho_{E_2,p^i}(\text{Frob}_{\ell}))\equiv b_{\ell} \text{ mod }p^i 
\end{equation*}
(see e.g. \cite[\S 5]{ribet}). In particular, this holds at all $\ell$ coprime with $Mp$. Since the trace of a matrix is an isomorphism-invariant of representations, the hypothesis that $E_1[p^i]\cong E_2[p^i]$, together with the above, implies that $a_{n}\equiv b_n \text{ mod }p^i$ for all $n$ coprime with $Mp$.

Denote by $D_p$ the decomposition group at $p$ and by $\rho_{E_j}$ the representation of $\text{Gal}(\bar{\mathbb{Q}}/\mathbb{Q})$ on the $p$-adic Tate module of $E_j$. Since $E_1$ and $E_2$ are ordinary at $p$ by assumption, by \cite[Theorem 2]{wiles} we have
\begin{equation*}
\rho_{E_1}|_{D_p} \sim \left(\begin{smallmatrix}\epsilon_1^1 & *^1\\
0 & \epsilon_2^1 \end{smallmatrix}\right) \qquad \rho_{E_2}|_{D_p}\sim \left(\begin{smallmatrix}\epsilon_1^2 & *^2\\
0 & \epsilon_2^2 \end{smallmatrix}\right),
\end{equation*}
where $\epsilon_2^1$ and $\epsilon_2^2$ are unramified and $\epsilon_2^1(\text{Frob}_p)=\alpha_{E_1}$ and $\epsilon_2^2(\text{Frob}_p)=\alpha_{E_2}$. Here $\alpha_{E_1}$ is the unique unit root of $X^2-a_pX+p$ and $\alpha_{E_2}$ is the unique unit root of $X^2-b_pX+p$. Since $\rho_{E_j,p^i}\cong \rho_{E_j} \mod p^i$, we conclude that $\alpha_{E_1}\equiv \alpha_{E_2} \mod p^i$.

Let $\beta_{E_1}$ respectively $\beta_{E_2}$ be the other root of the corresponding characteristic polynomials of Frobenius. Then $\alpha_{E_1}\beta_{E_1}=p=\alpha_{E_2}\beta_{E_2}$. Since $\alpha_{E_1}\equiv \alpha_{E_2} \mod p^i$ and $\nu_p(\alpha_{E_j})=0$, we also have the congruence $\beta_{E_1}\equiv\beta_{E_2} \mod p^i$. Therefore $a_p=-(\alpha_{E_1}+\beta_{E_1})\equiv -(\alpha_{E_2}+\beta_{E_2})  \equiv b_p$.\\
This shows that
\begin{equation*}
a_n\equiv b_n \text{ mod }p^i,\ \text{for all}\ (n,M)=1.
\end{equation*}
Let now $g_1$ and $g_2$ be the normalised cuspidal eigenforms of weight 2 obtained from $f_1$ and $f_2$ by deleting all Euler factors at primes in $\Sigma$. These may be viewed as eigenforms of level $M^2$. By construction, $g_1$ and $g_2$ satisfy the congruence $g_1\equiv g_2 \mod p^i$.

We wish to apply Corollary \ref{congruencepadic}. In order to do so, we first need to check that $\alpha$ is an admissible sign for $g_1$ and $g_2$. But this follows from the assumption that $E_1[p]$ and $E_2[p]$ are irreducible together with Lemma \ref{corollaryeulerremoved2}.
Clearly $g_1$ and $g_2$ are good ordinary at $p$ and, with the notation of Corollary \ref{congruencepadic}, we have $K=\mathbb{Q}$. Therefore, we have a congruence
\begin{equation*}
\mathcal{L}_p(g_1,\chi,T)\equiv \mathcal{L}_p(g_2,\chi,T) \mod p^i \mathbb{Z}_p(\chi)[[T]].
\end{equation*}
Finally, by Lemma \ref{canonicalperiodsremovingEulerfactors}, the canonical periods of $f_1$ and $g_1$ differ by a $p$-adic unit. Moreover,
$L(g_1,\chi,s)=L^{\Sigma}(f_1,\chi,s)$. Similarly for $f_2$ and $g_2$. This, together with Proposition \ref{neronequalf}, completes the proof.
\end{proof}
We are now ready to prove Theorem \ref{main}.
\begin{proof}[Proof of Theorem \ref{main}]
If $\mu_{E_1}=0$, the inequality $\mu_{E_1}\leq \mu_{E_2}$ holds trivially. If $\mu_{E_1}>0$, applying Theorem \ref{conggen} with $i=\mu_{E_1}$ and $\chi$ the trivial character, we obtain the congruence
\begin{equation*}
\mathcal{L}^{\Sigma}(E_1,T)\equiv u\cdot \mathcal{L}^{\Sigma}(E_2,T) \mod p^{\mu_{E_1}}\mathbb{Z}_p[[T]],
\end{equation*}
for some $u\in \mathbb{Z}_p^*$. By Lemma \ref{muinvariantsareequal}, $p^{\mu_{E_1}}$ exactly divides $\mathcal{L}^{\Sigma}(E_1,T)$ and hence $p^{\mu_{E_1}}$ divides $\mathcal{L}^{\Sigma}(E_2,T)$. Finally, Lemma \ref{muinvariantsareequal} applied to $E_2$ then gives the inequality.
Similarly, Theorem \ref{conggen} with $i=\mu_{E_1}+1$ gives
\begin{equation*}
\frac{\mathcal{L}^{\Sigma}(E_1,T)}{p^{\mu_{E_1}}}\equiv u\cdot \frac{\mathcal{L}^{\Sigma}(E_2,T)}{p^{\mu_{E_1}}}\ (\text{mod}\ p\Lambda)
\end{equation*} 
and since the left hand side is a unit, so is the right hand side.
\end{proof}
The assumption of the irreducibility of $E_1[p]$ and $E_2[p]$ is rather strong. In fact, Greenberg conjectured that in this case the algebraic $\mu$-invariants of $E_1$ and $E_2$ should vanish (see \cite[Conjecture 1.11]{green}). Thus the most interesting case of Theorem \ref{main} is still probably the one covered in \cite{invariants}. However, Theorem \ref{conggen} also implies a stronger version of Theorem \ref{main}, as follows.
\begin{thm}
Let $E_1/\mathbb{Q}$ and $E_2/\mathbb{Q}$ be elliptic curves, $p$ an odd prime at which both $E_1$ and $E_2$ have good ordinary reduction and $\chi$ a Dirichlet character. Assume that $E_1[p^{\mu_{E_1,\chi}}]$ and $E_2[p^{\mu_{E_1,\chi}}]$ are isomorphic as Galois modules and that $E_1[p]$ and $E_2[p]$ are irreducible. Then $\mu_{E_1,\chi}\leq \mu_{E_2,\chi}$. Furthermore, if $E_1[p^{\mu_{E_1,\chi}+1}]$ and $E_2[p^{\mu_{E_1,\chi}+1}]$ are isomorphic, then $\mu_{E_1,\chi}=\mu_{E_2,\chi}$.
\end{thm}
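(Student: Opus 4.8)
The plan is to carry over the proof of Theorem~\ref{main} essentially word for word, now feeding the given Dirichlet character $\chi$ into Theorem~\ref{conggen} in place of the trivial one and replacing each $\mu_{E_j}$ by $\mu_{E_j,\chi}$. Write $m=\mu_{E_1,\chi}$ and let $\alpha=\chi(-1)$ be the sign of $\chi$; the hypothesis $E_1[p^{m}]\cong E_2[p^{m}]$ already presupposes that $m$ is a non-negative integer, so this may be assumed throughout.

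First I would pass from the $p$-adic $L$-functions to their $\Sigma$-incomplete versions, where $\Sigma$ is the set of primes dividing $N_1N_2$. Since $E_j[p]$ is irreducible, Proposition~\ref{neronequalf} shows that $\tfrac{(-2\pi i)\Omega_{f_j}^{\alpha}}{\Omega_{E_j}^{\alpha}}$ is a $p$-adic unit, where $f_j$ is the newform attached to $E_j$; hence the $\mu$-invariant of $\mathcal{L}_p^{\Sigma}(E_j,\chi,T)$ coincides with $\mu_{f_j,\chi,\Sigma}$, which equals $\mu_{f_j,\chi}=\mu_{E_j,\chi}$ by Lemma~\ref{muinvariantsareequal}. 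So it suffices to compare the $\mu$-invariants of $\mathcal{L}_p^{\Sigma}(E_1,\chi,T)$ and $\mathcal{L}_p^{\Sigma}(E_2,\chi,T)$.

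For the inequality, the case $m=0$ reduces to the integrality of $\mathcal{L}_p^{\Sigma}(E_2,\chi,T)$, which holds because, $E_2[p]$ being irreducible, this function is a $p$-adic unit multiple of an integral power series (by the canonical-period normalisation of Vatsal and Lemma~\ref{canonicalperiodsremovingEulerfactors}), so $\mu_{E_2,\chi}\ge 0=m$. If $m>0$, I would invoke Theorem~\ref{conggen} with $i=m$ to get
\begin{equation*}
\mathcal{L}_p^{\Sigma}(E_1,\chi,T)\equiv u\cdot\mathcal{L}_p^{\Sigma}(E_2,\chi,T)\ (\text{mod}\ p^{m}\Lambda)
\end{equation*}
for some $u\in\mathbb{Z}_p[\chi]^{*}$; since $p^{m}$ exactly divides the left-hand side it divides the right-hand side, whence $\mu_{E_2,\chi}\ge m$. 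For the refinement, assuming $E_1[p^{m+1}]\cong E_2[p^{m+1}]$, I would apply Theorem~\ref{conggen} once more with $i=m+1$ and divide the resulting congruence through by $p^{m}$ to obtain
\begin{equation*}
\frac{\mathcal{L}_p^{\Sigma}(E_1,\chi,T)}{p^{m}}\equiv u\cdot\frac{\mathcal{L}_p^{\Sigma}(E_2,\chi,T)}{p^{m}}\ (\text{mod}\ p\Lambda),
\end{equation*}
and remark that the left-hand side lies outside $p\Lambda$, so the right-hand side does too; thus $\mu_{E_2,\chi}\le m$, and equality follows.

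The key inputs --- Theorem~\ref{conggen}, Lemma~\ref{muinvariantsareequal}, Lemma~\ref{canonicalperiodsremovingEulerfactors} and Proposition~\ref{neronequalf} --- are all already stated for an arbitrary Dirichlet character, so there is no essential new difficulty; the argument is a transcription of the proof of Theorem~\ref{main}. I expect the only, and minor, obstacle to be the bookkeeping for the $\mu$-invariant when $\mathbb{Z}_p[\chi]$ is ramified at $p$: one must check that ``$p^{m}$ exactly divides $\mathcal{L}_p^{\Sigma}(E_1,\chi,T)$'' is the correct reading of $\mu_{E_1,\chi}=m$ in terms of a uniformiser $\pi$ of $\mathbb{Z}_p[\chi]$, i.e.\ that $p^{m}$ and $\pi^{em}$ agree up to a unit, where $e$ is the ramification index --- together with the boundary case $m=0$ handled above.
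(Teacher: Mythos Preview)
Your proposal is correct and follows essentially the same approach as the paper: the paper's own proof consists of the single sentence that the argument follows from Theorem~\ref{conggen} in the same way as Theorem~\ref{main}, together with the remark (which you anticipate) that one must read the congruence modulo the appropriate power of a uniformiser $\pi$ of $\mathbb{Z}_p[\chi]$. Your write-up simply makes the steps of that transcription explicit.
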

\begin{proof}
The proof follows from Theorem \ref{conggen} in the same way as Theorem \ref{main}. We remark that the congruence of Theorem \ref{main} clearly holds also modulo $\pi^i\Lambda$, for a uniformiser $\pi$ of $\mathbb{Z}_p[\chi]$.
\end{proof}
We also note that the statement of Theorem \ref{main} remains true if the assumption of good ordinary reduction at the prime $p$ is replaced by that of multiplicative reduction. In this case, one appeals to \cite[Theorem 2.1 (ii)]{wilesmodularity} to prove that both the newforms $f_1,f_2$ attached to $E_1$ and $E_2$ and their modified forms $g_1,g_2$, obtained by deleting all Euler factors at primes $q\neq p$ dividing $N_1N_2$, satisfy the cofreeness assumption (Assumption \ref{mainassumption}). With an appropriate modification of Definition \ref{def p-adic L function}, one then obtains a congruence between the $p$-adic $L$-functions of $g_1$ and $g_2$. Finally, an analogue of Lemma \ref{canonicalperiodsremovingEulerfactors} also holds in this case. We leave the details to the reader.
\bibliographystyle{alpha}
\bibliography{bibliomu}

\def\cprime{$'$}
\begin{thebibliography}{MSD74}

\bibitem[BS10]{barman}
R.~Barman and A.~Saikia.
\newblock A note on {I}wasawa {$\mu$}-invariants of elliptic curves.
\newblock {\em Bull. Braz. Math. Soc. (N.S.)}, 41(3):399--407, 2010.

\bibitem[DI95]{diamondim}
F.~Diamond and J.~Im.
\newblock Modular forms and modular curves.
\newblock In {\em Seminar on {F}ermat's {L}ast {T}heorem ({T}oronto, {ON},
  1993--1994)}, volume~17 of {\em CMS Conf. Proc.}, pages 39--133. Amer. Math.
  Soc., Providence, RI, 1995.

\bibitem[Gre99]{green}
R.~Greenberg.
\newblock Iwasawa theory for elliptic curves.
\newblock In {\em Arithmetic theory of elliptic curves ({C}etraro, 1997)},
  volume 1716 of {\em Lecture Notes in Math.}, pages 51--144. Springer, Berlin,
  1999.

\bibitem[GV00]{invariants}
R.~Greenberg and V.~Vatsal.
\newblock On the {I}wasawa invariants of elliptic curves.
\newblock {\em Invent. Math.}, 142(1):17--63, 2000.

\bibitem[Maz78]{mazurrationalprime}
B.~Mazur.
\newblock Rational isogenies of prime degree (with an appendix by {D}.
  {G}oldfeld).
\newblock {\em Invent. Math.}, 44(2):129--162, 1978.

\bibitem[MSD74]{arithmeticofWeilcurves}
B.~Mazur and P.~Swinnerton-Dyer.
\newblock Arithmetic of {W}eil curves.
\newblock {\em Invent. Math.}, 25:1--61, 1974.

\bibitem[Rib95]{ribet}
K.~A. Ribet.
\newblock Galois representations and modular forms.
\newblock {\em Bull. Amer. Math. Soc. (N.S.)}, 32(4):375--402, 1995.

\bibitem[Shi76]{shimuraspecialvalues}
G.~Shimura.
\newblock The special values of the zeta functions associated with cusp forms.
\newblock {\em Comm. Pure Appl. Math.}, 29(6):783--804, 1976.

\bibitem[Shi94]{shimuraauto}
G.~Shimura.
\newblock {\em Introduction to the arithmetic theory of automorphic functions},
  volume~11 of {\em Publications of the Mathematical Society of Japan}.
\newblock Princeton University Press, Princeton, NJ, 1994.
\newblock Reprint of the 1971 original, Kan{\^o} Memorial Lectures, 1.

\bibitem[Ste89]{stevensstickelberger}
G.~Stevens.
\newblock Stickelberger elements and modular parametrizations of elliptic
  curves.
\newblock {\em Invent. Math.}, 98(1):75--106, 1989.

\bibitem[Vat99]{vatsal}
V.~Vatsal.
\newblock Canonical periods and congruence formulae.
\newblock {\em Duke Math. J.}, 98(2):397--419, 1999.

\bibitem[Vat13]{vatsalintegralperiods}
V.~Vatsal.
\newblock Integral periods for modular forms.
\newblock {\em Ann. Math. Qu\'e.}, 37(1):109--128, 2013.

\bibitem[Wil88]{wiles}
A.~Wiles.
\newblock On ordinary {$\lambda$}-adic representations associated to modular
  forms.
\newblock {\em Invent. Math.}, 94(3):529--573, 1988.

\bibitem[Wil95]{wilesmodularity}
A.~Wiles.
\newblock Modular elliptic curves and {F}ermat's last theorem.
\newblock {\em Ann. of Math. (2)}, 141(3):443--551, 1995.

\bibitem[Wut14]{wuthrichintegral}
C.~Wuthrich.
\newblock On the integrality of modular symbols and {K}ato's {E}uler system for
  elliptic curves.
\newblock {\em Doc. Math.}, 19:381--402, 2014.

\end{thebibliography}
\Addresses
\end{document}